\crefname{assumption}{Assumption}{Assumptions}
\newcommand{\R}{\mathbb{R}}
\newcommand{\Hc}{\mathcal{H}}
\newcommand{\dx}{\,\mathrm{d}x}
\newcommand{\dt}{\,\mathrm{d}t}
\newcommand\restr[2]{\ensuremath{\left.#1\right\vert_{#2}}}
\renewcommand{\D}{\mathcal{D}}
\newtheorem{assumption}{Assumption}
\begin{document}
	\title*{An abstract approach to the Robin--Robin method}
	\author{Emil Engström and Eskil Hansen}
    \institute{Emil Engström, Eskil Hansen \at Lund University, P.O. Box 118, 221 00 Lund, Sweden\\ \email{emil.engstrom@math.lth.se, eskil.hansen@math.lth.se}}
	\maketitle
    \abstract*{Recently, their has been development of an abstract approach to the Robin--Robin method, enabling the treatment of linear and nonlinear elliptic and parabolic equations on Lipschitz domains within one framework. However, previously this setting has not been applicable to initial-boundary value problems. The aim of this short note is therefore to demonstrate that this general framework can be applied to such problems as well.}

\section{Introduction}\label{sec:intro}
The nonoverlapping Robin--Robin method was introduced in~\cite{lions3} and shown to converge when applied to linear elliptic equations. Since then there has been several theoretical results concerning the method for both nonlinear elliptic and parabolic equations; see, e.g.,~\cite{Halpern10,gander23,quarteroni} and references therein.
    
Recently, we have derived an abstract approach to the convergence of domain decompositions methods, and in particular to the Robin--Robin method~\cite{EHEE22,EHEE24}. 
Our approach is based on the observation by~\cite{agoshkov83}; also see \cite{discacciati07}, that the Robin--Robin method can be reformulated into a Peaceman--Rachford iteration on the interface of the subdomains by making use of Steklov--Poincaré operators. More precisely, for two subdomains in space, or space-time, the Robin--Robin approximation can formally be written as $(u^n_1,u^n_2)=(F_1\eta^n,F_2\eta^n)$, where $F_i$ is a solution operator of the equation on a single subdomain, and 
the iterates $\eta^n$ on the interface of the subdomains are given by 
\begin{equation}\label{eq:pr}
    		 \eta^{n+1}=(sJ+S_2)^{-1}(sJ-S_1)(sJ+S_1)^{-1}(sJ-S_2)\eta^n,\quad n=0,1,2,\ldots
\end{equation}
Here, $S_i: Z\to Z^*$ denote the possibly nonlinear Steklov--Poincaré operators and $s>0$ is the method parameter. Moreover, $J:\mu\mapsto (\mu, \cdot)_H$ for some Gelfand triple $Z\hookrightarrow H\hookrightarrow Z^*$.

This reformulation in terms of Steklov--Poincaré operators means that both linear and nonlinear, elliptic and parabolic equations can all be treated within the same framework. For this abstract framework to be applicable their are only three requirements. First, the operators $S_1+S_2, sJ+S_i$ must be bijective. Second, the Steklov--Poincaré operators must satisfy a monotonicity property of the form  
\begin{equation}\label{eq:mon}
k(\|F_i\eta-F_i\mu\|_{X_i})\leq \langle S_i\eta-S_i\mu, \eta-\mu\rangle_{Z^*\times Z},
\end{equation}
where the function $k(x)>0$ tends to zero as $x$ tends to zero. Third, the solution $u$ to the original equation must have a normal derivative on the interface belonging to the Hilbert space $H$. 

By restricting the Steklov--Poincaré operators $S_i$ into maximal monotone operators $\mathcal{S}_i$ on $H$ and employing the assumed regularity of $u$, the abstract result~\cite{lionsmercier} yields the limit 
\begin{equation}\label{eq:limit}
\langle S_i\eta-S_i\eta^n, \eta-\eta^n\rangle_{Z^*\times Z}=(\mathcal{S}_i\eta-\mathcal{S}_i\eta^n, \eta-\eta^n)_H\to0\text{ as }n\to 0,
\end{equation}
where $\eta$ is the restriction of $u$ to the interface of the subdomains. See~\cite[Section~8]{EHEE22} for details. Combining this limit with~\cref{eq:mon} yields that the Robin--Robin approximation $(u^n_1,u^n_2)$ converges in the $X_1\times X_2$-norm.

The main two issues when studying nonlinear elliptic or parabolic equations, compared to linear elliptic problems, are that $Z$ might not be a Hilbert space and the weak formulation of the equation may require different test and trial spaces. The first issue arises when approximating nonlinear degenerate elliptic equations, where the bijectivity of the Steklov--Poincaré operators can be resolved by using the Browder--Minty theorem~\cite{EHEE22}. The aim of this short note is to illustrate the usage of the abstract framework in the context of the second issue. To this end, we consider linear parabolic equations with homogeneous initial, boundary data, i.e., 
\begin{equation}\label{eq:strong+}
 \left\{
        \begin{aligned}
                u_t-\nabla\cdot\bigl(\alpha(x)\nabla u\bigr)&=f  & &\text{in }\Omega\times\R^+,\\
               u&=0 & &\text{on }\partial\Omega\times\R^+\text{ and in }\Omega\times\{0\}.
            \end{aligned}
    \right.
\end{equation}
Here, the spatial Lipschitz domain $\Omega\subset\R^d$, $d=2,3$, is decomposed as
\begin{equation}\label{eq:domain}
\overline{\Omega}=\overline{\Omega}_1\cup\overline{\Omega}_2,\quad \Omega_1\cap\Omega_2=\emptyset,\quad\text{and}\quad\Gamma=(\partial\Omega_1\cap\partial\Omega_2)\setminus\partial\Omega. 
\end{equation}
Even this simple setting gives rise to a weak formulation with different test and trial spaces. Furthermore, the standard parabolic setting with the trial space in $H^1\bigl(\R^+,H^{-1}(\Omega)\bigr)$ does not give rise to a well defined transmission problem. Instead we will employ a $H^{1/2}$-setting for the temporal regularity and prove the bijectivity of the Steklov--Poincaré operators via the Banach--Ne\v{c}as--Babu\v{s}ka theorem. Convergence is then obtained in $X_i=L^2\bigl(\R^+,H^1(\Omega_i)\bigr)$. Unlike previous studies, the Robin--Robin naturally preserves the homogeneous initial condition in this setting and no further regularity assumptions are required regarding the numerical iterates or the subdomain's boundaries. 
\section{Preliminaries}\label{sec:prel}
  We will assume that the following holds, which is a requirement for defining the trace operator and the Sobolev spaces on $\partial\Omega_i$ and $\Gamma$.
 \begin{assumption}\label{ass:dom}
        The subdomains $\Omega_i$ are Lipschitz and bounded. The interface $\Gamma$ and exterior boundaries $\partial\Omega\setminus\partial\Omega_{i}$ are $(d-1)$-dimensional Lipschitz manifolds.
    \end{assumption}
    The spaces on the spatial domains are defined as
    \begin{displaymath}
        V=H_0^1(\Omega),\quad  V_i^0=H_0^1(\Omega_i),\quad\text{and}\quad
        V_i=\{v\in H^1(\Omega_i): \restr{(\hat{T}_{\partial\Omega_i}v)}{\partial\Omega_i\setminus\Gamma}=0\},
    \end{displaymath}
    where $\hat{T}_{\partial\Omega_i}:H^1(\Omega_i)\rightarrow H^{1/2}(\partial\Omega_i)$ is the trace operator, see~\cite[Theorem 6.8.13]{kufner} for details. Moreover, we define the fractional Sobolev space $H^{1/2}(\partial\Omega_i)$ as in~\cite[p. 591]{EHEE22}. The spatial Lions--Magenes space, see e.g.~\cite{EHEE24}, is denoted by $\Lambda$. We define the spatial interface trace operator $\hat{T}_i:V_i\rightarrow \Lambda:u\mapsto\restr{(\hat{T}_{\partial\Omega_i}u)}{\Gamma}$ and note that this is a bounded linear operator, see e.g.~\cite[Lemma 4.4]{EHEE22}.
    
    For the temporal fractional Sobolev spaces $H^s(\R)$ we use the Fourier characterization, see~\cite[(3.2)]{EHEE24} for a full definition. Next, we define the fractional Sobolev spaces on $\R^+$ and the temporal Lions--Magenes space by
\begin{equation*}
     \begin{aligned}
         H^s(\R^+)&=\{u\in L^2(\R^+): \hat{E}_\text{even} u\in H^s(\R)\}& &\text{with} &\|u\|_{H^s(\R^+)} &=\|\hat{E}_\text{even} u\|_{H^s(\R)},\\
    	    H_{00}^{1/2}(\R^+)&=\{u\in L^2(\R^+): \hat{E}_\R u\in H^{1/2}(\R)\} & &\text{with} &\|u\|_{H_{00}^{1/2}(\R^+)} &=\|\hat{E}_\R u\|_{H^{1/2}(\R)},
     \end{aligned}
\end{equation*}
    respectively. Here $\hat{E}_\R$ is the extension by zero and $\hat{E}_\text{even}$ is the even extension.
    We will make use of the Bochner--Sobolev spaces $L^2(\R^+, Y)$, $H^s(\R^+, Y)$, and $H_{00}^{1/2}(\R^+, Y)$, where $Y$ denotes an abstract Hilbert space. According to~\cite[Lemma 2]{EHEE24} our spatial operators $\hat{T}_{\partial\Omega_i}$, $\hat{T}_i$, $\hat{E}_\R$, $\hat{E}_{\text{even}}$ can be extended as follows
    \begin{align*}
        &T_{\partial\Omega_i}:L^2\bigl(\R^+,  H^1(\Omega_i)\bigr) \rightarrow L^2\bigl(\R^+,  H^{1/2}(\partial\Omega_i)\bigr),\,
        T_i:L^2(\R^+,  V_i)\rightarrow L^2(\R^+,  \Lambda),\\
        &E_\R: L^2\bigl(\R^+,  L^2(\Omega_i)\bigr)\rightarrow L^2\bigl(\R,  L^2(\Omega_i)\bigr),\, E_\text{even}: L^2\bigl(\R^+,  L^2(\Omega_i)\bigr)\rightarrow L^2\bigl(\R,  L^2(\Omega_i)\bigr).
    \end{align*}
   Moreover, we have the relations
    \begin{align}
                H^s\bigl(\R^+, L^2(\Omega_i)\bigr)&=\{u\in L^2\bigl(\R^+,  L^2(\Omega_i)\bigr): E_{\text{even}} u\in H^s\bigl(\R, L^2(\Omega_i)\bigr)\},\label{eq:H12}\\
                H^{1/2}_{00}\bigl(\R^+, L^2(\Omega_i)\bigr)&=\{u\in L^2\bigl(\R^+,  L^2(\Omega)\bigr): E_\R u\in H^{1/2}\bigl(\R, L^2(\Omega_i)\bigr)\},\label{eq:H1200}
    \end{align}
    with equivalent norms
    \begin{align*}
                \|u\|_{H^s(\R^+, L^2(\Omega_i))}=\|E_{\text{even}}u\|_{H^s(\R, L^2(\Omega_i))},\,\|u\|_{H^{1/2}_{00}(\R^+, L^2(\Omega_i))}=\|E_\R u\|_{H^{1/2}(\R, L^2(\Omega_i))}.
    \end{align*}
    
    We introduce the Hilbert spaces
\begin{equation*}
     \begin{aligned}
            W &= H^{1/2}_{00}\bigl(\R^+, L^2(\Omega)\bigr)\cap L^2\bigl(\R^+, V\bigr),  &\Tilde{W} &= H^{1/2}\bigl(\R^+, L^2(\Omega)\bigr)\cap L^2\bigl(\R^+, V\bigr),\\
            W_i &= H^{1/2}_{00}\bigl(\R^+, L^2(\Omega_i)\bigr)\cap L^2\bigl(\R^+, V_i\bigr), &\Tilde{W}_i &= H^{1/2}\bigl(\R^+, L^2(\Omega_i)\bigr)\cap L^2\bigl(\R^+, V_i\bigr),\\
        W_i^0 &= H^{1/2}_{00}\bigl(\R^+, L^2(\Omega_i)\bigr)\cap L^2\bigl(\R^+, V_i^0\bigr), &\Tilde{W}_i^0 &= H^{1/2}\bigl(\R^+, L^2(\Omega_i)\bigr)\cap L^2\bigl(\R^+, V_i^0\bigr),\\
        Z&=H^{1/4}\bigl(\R^+, L^2(\Gamma)\bigr)\cap L^2\bigl(\R^+, \Lambda\bigr). & & &
        \end{aligned}
\end{equation*}
    Finally, we define the sets $\D=C^\infty_0\bigl(\R^+, C^\infty_0(\Omega)\bigr)$ and $\D_i=C^\infty_0\bigl(\R^+, C^\infty(\overline{\Omega_i})\bigr)$.
    \begin{lemma}\label{lemma:dense}
        The set $\D$ is dense in $H^{1/2}\bigl(\R^+, L^2(\Omega)\bigr)$, $H^{1/2}_{00}\bigl(\R^+, L^2(\Omega)\bigr)$, and $L^2\bigl(\R^+, V\bigr)$. The set $\D_i$ is dense in $H^{1/2}\bigl(\R^+, L^2(\Omega_i)\bigr)$, $H^{1/2}_{00}\bigl(\R^+, L^2(\Omega_i)\bigr)$, and $L^2\bigl(\R^+, H^1(\Omega_i)\bigr)$.
    \end{lemma}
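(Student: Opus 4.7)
The plan is to reduce density in each of the six target spaces to temporal density of $C^\infty_0(\R^+)$ and spatial density of $C^\infty_0(\Omega)$ or $C^\infty(\overline{\Omega_i})$, and then combine these via a standard Bochner tensor-product argument. The spatial densities are classical: $C^\infty_0(\Omega)$ is dense in $V=H^1_0(\Omega)$ and in $L^2(\Omega)$, and $C^\infty(\overline{\Omega_i})$ is dense in $H^1(\Omega_i)$ via a Meyers--Serrin type result for bounded Lipschitz domains (see~\cite{kufner}) and trivially in $L^2(\Omega_i)$. The nontrivial ingredient is the temporal density.

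Density of $C^\infty_0(\R^+)$ in $L^2(\R^+)$ is standard. For $H^{1/2}_{00}(\R^+)$ I use that the norm equals $\|E_\R\,\cdot\,\|_{H^{1/2}(\R)}$: given $u$, the extension $E_\R u$ is supported in $[0,\infty)$, so right-translating by $\delta>0$ and then convolving with a mollifier of support in $(-\delta/2,\delta/2)$ yields smooth functions with support in $(\delta/2,\infty)$; continuity of translation and of mollifier approximation in $H^{1/2}(\R)$, plus a final time truncation, produces the required $C^\infty_0(\R^+)$ approximants. The subtle case is $H^{1/2}(\R^+)$: working with $E_\text{even}u\in H^{1/2}(\R)$, mollification and truncation yield even $C^\infty_c(\R)$ approximants whose restrictions lie only in $C^\infty_c([0,\infty))$. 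Further cutting off near $t=0$ to land in $C^\infty_0(\R^+)$ cannot be done with a naive smooth cutoff, since the associated $H^{1/2}(\R)$-seminorm is scale invariant. The classical remedy is a logarithmic cutoff, whose $H^{1/2}$-defect vanishes due to the logarithmic divergence of the associated Dirichlet form; this is the content of the classical density of $C^\infty_0(\Omega)$ in $H^s(\Omega)$ at $s\le 1/2$ on Lipschitz domains (Grisvard).

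For the tensor-product step, let $Y$ denote the spatial factor and $X$ the corresponding temporal space. In $L^2(\R^+,Y)$ one approximates $u$ by step functions $\sum_k\mathbf{1}_{I_k}(t)\,y_k$, then replaces each $y_k$ by a smooth spatial approximant and each $\mathbf{1}_{I_k}$ by an element of $C^\infty_0(\R^+)$, yielding density of simple tensors. For $X=H^{1/2}(\R^+,Y)$ or $H^{1/2}_{00}(\R^+,Y)$, the isometric identifications in~\cref{eq:H12} and~\cref{eq:H1200}, together with the fact that $E_\text{even}$ and $E_\R$ act only on the time variable, reduce the task to approximation in $H^{1/2}(\R,Y)$; there simple tensors have product norm $\|\phi\|_{H^{1/2}(\R^+)}\|v\|_Y$, and the temporal density just established delivers the conclusion. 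The main obstacle throughout is the critical-exponent $H^{1/2}(\R^+)$ density, which is settled by the logarithmic cutoff.
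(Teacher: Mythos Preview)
Your proposal is correct and follows the same overall architecture as the paper: establish the scalar temporal densities of $C^\infty_0(\R^+)$ in $L^2(\R^+)$, $H^{1/2}(\R^+)$, $H^{1/2}_{00}(\R^+)$, the spatial densities of $C^\infty_0(\Omega)$ and $C^\infty(\overline{\Omega_i})$ in the appropriate spaces, and then combine these via a tensor-product argument. The differences are purely in the implementation of the ingredients. For the critical case $C^\infty_0(\R^+)\subset H^{1/2}(\R^+)$ you build the logarithmic cutoff by hand, whereas the paper simply cites the classical Lions--Magenes result (\cite[Theorem~11.1]{lionsmagenes1}); for $H^{1/2}_{00}(\R^+)$ you use translation--mollification while the paper uses the interpolation identity $H^{1/2}_{00}(\R^+)=[H^1_0(\R^+),L^2(\R^+)]_{1/2}$ to inherit density from $H^1_0(\R^+)$; and for the tensor step the paper invokes a single Hilbert-tensor-product density theorem (\cite[Theorem~3.12]{weidmann}), exploiting that all Bochner spaces in play are Hilbert, rather than your step-function argument. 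Your route is more self-contained, the paper's is considerably shorter.
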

    \begin{proof}
        We first recall that $C^\infty_0(\R^+)$ is dense in $L^2(\R^+)$ and $H^{1/2}(\R^+)$; see~\cite[Theorem 11.1]{lionsmagenes1}. By the interpolation identity $H^{1/2}_{00}=[H^1_0(\R^+), L^2(\R^+)]_{1/2}$; see~\cite[Theorem 11.7, Remark 2.6]{lionsmagenes1}, we also have that $H^1_0(\R^+)$ is dense in $H_{00}^{1/2}(\R^+)$, which, by definition of $H^1_0(\R^+)$ and~\cite[Proposition 2.3]{lionsmagenes1}, implies that $C^\infty_0(\R^+)$ is dense in $H_{00}^{1/2}(\R^+)$. Moreover, $C^\infty_0(\Omega)$ is dense in $L^2(\Omega)$ and $V$; see~\cite[Theorem 2.6.1]{kufner}. Finally, recall that $C^\infty(\overline{\Omega_i})$ is dense in $L^2(\Omega_i)$ and $H^1(\Omega_i)$; see~\cite[Theorem 2.6.1, Theorem 5.5.9]{kufner}. The result now follows from~\cite[Theorem 3.12]{weidmann}.
    \end{proof}
    The trace operator defined on $W_i$ has the following behaviour. The statement follows from~\cite[Lemma 2.4, Corollary 2.12]{costabel90} using the same techniques as in~\cite[Lemma 5]{EHEE24}.
    \begin{lemma}\label{lemma:Ti}
        The trace operator is bounded as an operator $T_i: W_i\rightarrow Z$ and $T_i: \tilde{W}_i\rightarrow Z$. Moreover, there exists a bounded right inverse $R_i:Z\rightarrow W_i$.
    \end{lemma}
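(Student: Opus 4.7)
The plan is to reduce everything to Costabel's anisotropic trace theorem on the cylinder $\R\times\Omega_i$, namely~\cite[Lemma~2.4 and Corollary~2.12]{costabel90}, which states that the trace operator on $\R\times\partial\Omega_i$ is a bounded surjection
\[
L^2\bigl(\R,H^1(\Omega_i)\bigr)\cap H^{1/2}\bigl(\R,L^2(\Omega_i)\bigr)\rightarrow L^2\bigl(\R,H^{1/2}(\partial\Omega_i)\bigr)\cap H^{1/4}\bigl(\R,L^2(\partial\Omega_i)\bigr)
\]
with a bounded right inverse. I would combine this with the boundedness of the spatial trace $\hat T_i:V_i\to\Lambda$ recorded in the preliminaries and the identifications~\cref{eq:H12,eq:H1200} of the Bochner--Sobolev spaces on $\R^+$ via temporal extensions.

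For the boundedness of $T_i$, I would take $u\in W_i$ and apply the temporal extension by zero $E_\R$, which maps $W_i$ into $L^2(\R, V_i)\cap H^{1/2}\bigl(\R, L^2(\Omega_i)\bigr)$ by~\cref{eq:H1200}. Costabel's theorem then places $T_{\partial\Omega_i}(E_\R u)$ in the target anisotropic space above, and restricting in time to $\R^+$ and in space to $\Gamma$, together with the boundedness of $\hat T_i:V_i\to \Lambda$ in the $L^2$-in-time component, yields $T_iu\in Z$ with the required norm estimate. For $u\in\tilde W_i$ the same argument goes through verbatim with $E_\R$ replaced by the even extension $E_\text{even}$ and~\cref{eq:H12} used in place of~\cref{eq:H1200}.

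For the right inverse $R_i$, given $\eta\in Z$ I would first extend by zero in space from $\Gamma$ to $\partial\Omega_i$, which is bounded from $\Lambda$ into $H^{1/2}(\partial\Omega_i)$ by the very definition of the Lions--Magenes space, and then extend by zero in time, noting that $E_\R$ is bounded on $H^{1/4}(\R^+,\cdot)$ since $1/4<1/2$. Applying Costabel's right inverse produces a lift $v\in L^2\bigl(\R, H^1(\Omega_i)\bigr)\cap H^{1/2}\bigl(\R, L^2(\Omega_i)\bigr)$ whose spatial trace vanishes on $\R\times(\partial\Omega_i\setminus\Gamma)$, so that $v(\cdot,t)\in V_i$ for a.e.\ $t$, and I set $R_i\eta:=v|_{\R^+}$. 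The main obstacle is to ensure that $R_i\eta$ lies in $W_i$ rather than only in $\tilde W_i$: this requires $v$ to be supported in $\overline{\R^+}$ in time, so that $v=E_\R(v|_{\R^+})$ in $H^{1/2}\bigl(\R, L^2(\Omega_i)\bigr)$, which by~\cref{eq:H1200} places $v|_{\R^+}$ in $H^{1/2}_{00}\bigl(\R^+, L^2(\Omega_i)\bigr)$. Support preservation can be arranged by choosing Costabel's right inverse as a Fourier-multiplier construction in the spatial variables that acts pointwise in time, which is precisely the technique employed in~\cite[Lemma~5]{EHEE24}.
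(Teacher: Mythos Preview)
Your proposal is correct and follows essentially the same approach as the paper, which simply states that the result follows from \cite[Lemma~2.4, Corollary~2.12]{costabel90} together with the techniques of \cite[Lemma~5]{EHEE24}. You have spelled out precisely those techniques: pass to the full time line via $E_\R$ or $E_{\text{even}}$, invoke Costabel's anisotropic trace and lifting on $\R\times\Omega_i$, and for the right inverse ensure the lifting acts pointwise in time so that temporal support in $\overline{\R^+}$ is preserved, which is the key point from \cite[Lemma~5]{EHEE24} needed to land in $W_i$ rather than merely $\tilde W_i$.
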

    \begin{remark}
        The equation requires different trial and test spaces, $W_i$ and $\tilde{W}_i$, respectively. However, due to the fact that they share the same trace space $Z$ the Steklov--Poincaré theory can be formulated using only one space $Z$. Moreover, the inclusion $W_i\hookrightarrow \tilde{W}_i$ means that the extension operator is also bounded as $R_i:Z\rightarrow \tilde{W}_i$, which is required for the Steklov--Poincaré operators to be well defined.
    \end{remark}
    
    \section{Weak formulations of parabolic equations}\label{sec:weak}
    To perform our analysis we make the following assumption on the equation~\cref{eq:strong+}.
    \begin{assumption}\label{ass:eq}
        The equation~\cref{eq:strong+} satisfies the following.
        \begin{itemize}
            \item The function $\alpha\in L^\infty(\Omega)$ satisfies the bound $\alpha(x)\geq c>0$ for a.e.\ $x\in\Omega$. 
                        \item We have $f\in \tilde{W}^*$ and there exist $f_i\in \Tilde{W}_i^*$ such that
            \begin{equation*}
                \langle f, v\rangle =\langle f_1, \restr{v}{\Omega_1\times\R^+}\rangle+\langle f_2, \restr{v}{\Omega_2\times\R^+}\rangle\quad \text{for all }v\in \tilde{W}.
            \end{equation*}
        \end{itemize}
    \end{assumption}
    We introduce the operator $A_i:W_i\rightarrow \Tilde{W}_i^*$ as the extension of
    \begin{align*}
        \langle A_iu_i, v_i\rangle=\int_{\R^+}\int_{\Omega_i}\partial_tu_i\,v_i+\alpha(x)\nabla u_i\cdot\nabla v_i\,\,\mathrm{d}x\,\mathrm{d}t,
    \end{align*}
     where $u_i, v_i\in\D_i$. The operator $A:W\rightarrow \tilde{W}^*$ is defined similarly using $u, v\in\D$.
    \begin{lemma}\label{lemma:ai}
        Suppose that~\cref{ass:dom,ass:eq} hold. The operators $A_i:W_i\rightarrow \Tilde{W}_i^*$ and $A:W\rightarrow \Tilde{W}^*$ are bounded linear operators and one has the $A_i$-bounds
        \begin{equation}\label{eq:Aico}
            \langle A_iu, u\rangle\geq c\|u\|_{L^2(\R^+, V_i)}^2\quad\text{for all }u\in W_i.
        \end{equation}
        Moreover, there exists a bounded linear operator $B_i:W_i\rightarrow\tilde{W}_i$ such that
        \begin{equation*}
            \langle A_iu, B_iu\rangle\geq c\|u\|_{W_i}^2\quad\text{for all }u\in W_i.
        \end{equation*}
    \end{lemma}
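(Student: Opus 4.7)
The plan is to verify the three assertions in order, lifting each identity from the dense set $\D_i$ (see \cref{lemma:dense}) to $W_i,\tilde{W}_i$ and exploiting the Fourier-multiplier factorisation
\begin{equation*}
\partial_t = -|D_t|^{1/2}\,H_t\,|D_t|^{1/2}
\end{equation*}
in the time variable, where $H_t$ is the Hilbert transform in time, an isometry on $L^2(\R, L^2(\Omega_i))$.

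\textbf{Boundedness.} For $u,v\in\D_i$ the elliptic contribution to $\langle A_iu,v\rangle$ is controlled by $\|\alpha\|_\infty\|u\|_{L^2(\R^+,V_i)}\|v\|_{L^2(\R^+,V_i)}$ via Cauchy--Schwarz. Since $u$ is compactly supported in $(0,\infty)$, $\partial_t(E_\R u)$ vanishes on $\R^-$ and hence
\begin{equation*}
\int_{\R^+}\!\int_{\Omega_i}\partial_t u\,v\dx\dt=\int_\R\!\int_{\Omega_i}\partial_t(E_\R u)\cdot E_{\text{even}}v\dx\dt.
\end{equation*}
The factorisation above, combined with Plancherel in time and the $L^2$-boundedness of $H_t$, estimates the right-hand side by $C\|E_\R u\|_{H^{1/2}(\R,L^2(\Omega_i))}\|E_{\text{even}}v\|_{H^{1/2}(\R,L^2(\Omega_i))}=C\|u\|_{W_i}\|v\|_{\tilde{W}_i}$. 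Density then extends $A_i$ to a bounded operator $W_i\to\tilde{W}_i^*$; the argument for $A$ is identical.

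\textbf{Coercivity on $L^2(\R^+,V_i)$.} For $u\in\D_i$ the time contribution equals $\tfrac12\int_{\Omega_i}[u^2]_{t=0}^{t=\infty}\dx=0$, while the elliptic part gives $\int\alpha|\nabla u|^2\geq c\|\nabla u\|_{L^2(\R^+,L^2(\Omega_i))}^2$. Since $\partial\Omega_i\setminus\Gamma$ has positive $(d-1)$-measure, Poincaré on $V_i$ upgrades this to $c'\|u\|_{L^2(\R^+,V_i)}^2$, and continuity of $A_i$ extends the bound to all of $W_i$.

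\textbf{Construction of $B_i$.} Set $B_iu = u - \beta\,(H_tE_\R u)|_{\R^+}$ for a positive constant $\beta$ to be chosen. Because $H_t$ commutes with $\nabla_x$, preserves the spatial boundary condition defining $V_i$, and is bounded on $H^s(\R,L^2(\Omega_i))$ for every $s\geq0$, $B_i:W_i\to\tilde{W}_i$ is bounded (note that $(H_tE_\R u)|_{\R^+}$ typically has nonzero trace at $t=0$, so the range is $\tilde{W}_i$ rather than $W_i$). A direct Fourier computation yields $\int_\R\partial_t(E_\R u)\cdot H_t(E_\R u)\dt=-\||D_t|^{1/2}E_\R u\|_{L^2}^2$, so
\begin{equation*}
\langle A_iu,B_iu\rangle = \beta\||D_t|^{1/2}E_\R u\|_{L^2}^2 + \int\alpha|\nabla u|^2\dx\dt - \beta\int\alpha\nabla u\cdot\nabla(H_tE_\R u)\dx\dt.
\end{equation*}
Young's inequality, together with $\|\nabla(H_tE_\R u)\|_{L^2}\leq\|\nabla u\|_{L^2(\R^+,L^2)}$, shows the cross term is absorbed into the second once $\beta\leq\tfrac12\sqrt{c/\|\alpha\|_\infty}$. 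The remaining positive expression dominates $\|u\|_{W_i}^2$ via Poincaré on $V_i$ and the equivalence $\|w\|_{H^{1/2}(\R,L^2)}^2\sim\|w\|_{L^2}^2+\||D_t|^{1/2}w\|_{L^2}^2$.

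\textbf{Main obstacle.} The delicate step is the design of $B_i$: it must lie in $\tilde{W}_i$ rather than $W_i$, recover the $H^{1/2}_{00}$-in-time half of the $W_i$-norm through its pairing with $\partial_t u$, and yet induce an elliptic cross term small enough to be absorbed. The Hilbert transform is the canonical symmetriser for $\partial_t$ via the factorisation above, and tuning $\beta$ in terms of $c$ and $\|\alpha\|_\infty$ balances the two competing effects; once this is in place, the remaining arguments (density and Young-type inequalities) are routine.
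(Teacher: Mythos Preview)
Your proof is correct and follows essentially the same route as the paper: split $A_i$ into its temporal and spatial parts, use the zero/even extensions and the Fourier factorisation of $\partial_t$ through the Hilbert transform for boundedness, observe that the time term vanishes when tested against $u$ itself to get~\cref{eq:Aico}, and build $B_i$ as the identity minus a small multiple of the (restricted) time-Hilbert transform applied to the zero extension. The only cosmetic difference is that the paper parametrises this last operator as $B_i^\varphi=R_{\R^+}\bigl(\cos(\varphi)I-\sin(\varphi)\mathcal{H}_i\bigr)E_\R$ and takes $\varphi$ small, whereas you write $B_iu=u-\beta\,(H_tE_\R u)|_{\R^+}$ and take $\beta$ small; the two choices are equivalent and the absorption argument is the same.
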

    \begin{proof}
    We prove the statement for $A_i$ since the case of $A$ follows similarly. We write $A_i=A_i^t+A_i^s$, where
       \begin{equation*}
            \langle A_i^tu, v\rangle=\int_{\R^+}\int_{\Omega_i}\partial_tu\,v\dx\dt\quad\text{and}\quad\langle A_i^su, v\rangle=\int_{\R^+}\int_{\Omega_i}\alpha(x)\nabla u\cdot\nabla v\dx\dt.
       \end{equation*}
        We consider first the temporal term. The identities~\cref{eq:H12,eq:H1200} then yield
        \begin{align*}
            &|\langle A_i^tu, v\rangle|=\biggl|\int_{\R^+}\int_{\Omega_i}\partial_tuv\dx\dt\biggr| =\biggl|\int_\R\int_{\Omega_i}\partial_tE_\R u \,E_{\text{even}}v\dx\dt\biggr|\\
            &\quad\leq C\|E_\R u\|_{H^{1/2}(\R, L^2(\Omega_i))}\|E_{\text{even}}v\|_{H^{1/2}(\R, L^2(\Omega_i))}\\
            &\quad\leq C\|u\|_{H_{00}^{1/2}(\R^+, L^2(\Omega_i))}\|v\|_{H^{1/2}(\R^+, L^2(\Omega_i))},
        \end{align*}
        where the first inequality follows as in~\cite[Section 5]{EHEE24}. This together with~\cref{lemma:dense} shows that $A_i^t$ extends to a bounded linear operator $A_i^t:H_{00}^{1/2}\bigl(\R^+, L^2(\Omega_i)\bigr)\rightarrow H^{1/2}\bigl(\R^+, L^2(\Omega_i)\bigr)^*$. Using this continuity, it is easy to verify that $\langle A_i^tu, u\rangle=0$ for all $u\in H_{00}^{1/2}\bigl(\R^+, L^2(\Omega_i)\bigr)$. Next, we define $B_i^\varphi=R_{\R^+}(\cos(\varphi)I-\sin(\varphi)\Hc_i)E_\R$ for $\varphi\in(0,\pi/2)$. Here, $R_{\R^+}:H^{1/2}\bigl(\R, L^2(\Omega_i)\bigr)\cap L^2(\R, V_i)\rightarrow \tilde{W}_i$ denotes the bounded linear operator given by the restriction to $\R^+$ and
        \begin{equation*}
            \Hc_i:H^{1/2}\bigl(\R, L^2(\Omega_i)\bigr)\cap L^2(\R, V_i)\rightarrow H^{1/2}\bigl(\R, L^2(\Omega_i)\bigr)\cap L^2(\R, V_i)
        \end{equation*}
        denotes the Hilbert transform; see~\cite[Section 4]{EHEE24}. By~\cite[(5.5)]{EHEE24} we have
        \begin{equation}\label{eq:Aitcoer2}
        \begin{aligned}
            \langle A_i^tu, B_i^\varphi u\rangle&=\int_\R\int_{\Omega_i}\partial_tE_\R u\,\bigl(\cos(\varphi)I-\sin(\varphi)\Hc_i\bigr)E_\R u\dx\dt\\
            &=\sin(\varphi)\|E_\R u\|_{H^{1/2}(\R^+, L^2(\Omega_i))}^2=\sin(\varphi)\|u\|_{H_{00}^{1/2}(\R^+, L^2(\Omega_i))}^2 
        \end{aligned}
        \end{equation}
        for all $u\in\D_i$. By continuity~\cref{eq:Aitcoer2} also holds for all $u\in H_{00}^{1/2}(\R^+, L^2(\Omega_i))$. 
        We now consider the spatial term. A standard argument shows that $A_i^s: L^2(\R^+, V_i)\rightarrow L^2(\R^+, V_i^*)\cong L^2(\R^+, V_i)^*$ is bounded and coercive. Finally, the fact that $\Hc_i$ is bounded yields 
        \begin{align*}
            &\langle A_i^su, B_i^\varphi u\rangle\geq \bigl(c\cos(\varphi)-C\sin(\varphi)\bigr)\|u\|_{L^2(\R^+, V_i)}^2\quad\text{for all }u\in L^2(\R^+, V_i).
        \end{align*}
        Putting this together, the operator $A_i=A_i^s+A_i^t$ extends to a continuous linear operator $A_i:W_i\rightarrow \tilde{W}_i^*$ that satisfies the bounds~\cref{eq:Aico} and
        \begin{align*}
            &\langle A_iu, B_i^\varphi u\rangle\geq c\sin(\varphi)\|u\|_{H_{00}^{1/2}(\R^+, L^2(\Omega_i))}^2+\bigl(c\cos(\varphi)-C\sin(\varphi)\bigr)\|u\|_{L^2(\R^+, V_i)}^2
        \end{align*}
        for all $u\in W_i$. Choosing $B_i=B_i^\varphi$ for $\varphi>0$ small enough finishes the proof.
    \end{proof}
    The weak formulation of the equation~\cref{eq:strong+} is to find $u\in W$ such that
    \begin{equation}\label{eq:weak}
	    \langle Au, v\rangle=\langle f, v\rangle\quad \textrm{for all }v\in \tilde{W}.
    \end{equation}
    Under~\cref{ass:eq} the weak problem has a unique solution; see~\cite[Corollary 3.9]{schwab17}. We also need the following existence result for solutions to the problems on $\Omega_i\times \R^+$ with nonhomogenous boundary data.
 \begin{lemma}\label{lemma:Fi}
Suppose that~\cref{ass:dom,ass:eq} hold. For $g\in (\Tilde{W}_i^0)^*$ and $\eta\in Z$ there exists a unique $u\in W_i$ such that $T_iu=\eta$ and
\begin{equation}\label{eq:nonhomo}
            \langle A_iu, v\rangle=\langle g, v\rangle\quad \textrm{for all }v\in \tilde{W}_i^0.
\end{equation}
The solution $u$ also satisfies the bound $\|u\|_{W_i}\leq C\bigl(\|g\|_{(\Tilde{W}_i^0)^*}+\|\eta\|_Z\bigr)$.
 \end{lemma}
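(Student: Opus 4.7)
The plan is to reduce to the homogeneous-trace case via the lifting $R_i$ and then apply the Banach--Ne\v{c}as--Babu\v{s}ka theorem to $A_i\colon W_i^0 \to (\tilde W_i^0)^*$, exploiting the coercivity of $A_i$ against $B_i$ already proved in \cref{lemma:ai}.

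I would first set $w = R_i\eta \in W_i$, so that $T_iw = \eta$ and $\|w\|_{W_i} \leq C\|\eta\|_Z$ by \cref{lemma:Ti}, and seek $u$ in the form $u = u_0 + w$ with $u_0 \in W_i^0$. Then $T_iu = \eta$ automatically, and \cref{eq:nonhomo} is equivalent to finding $u_0 \in W_i^0$ with
\begin{equation*}
    \langle A_i u_0, v\rangle = \langle g, v\rangle - \langle A_iw, v\rangle \quad \text{for all } v \in \tilde W_i^0.
\end{equation*}
By the boundedness of $A_i$ from \cref{lemma:ai}, the right-hand side defines a functional in $(\tilde W_i^0)^*$ of norm at most $\|g\|_{(\tilde W_i^0)^*} + C\|\eta\|_Z$. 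The inf-sup condition for $A_i$ on $W_i^0 \times \tilde W_i^0$ then follows from \cref{lemma:ai} once one observes that $B_i = R_{\R^+}(\cos(\varphi)I - \sin(\varphi)\Hc_i)E_\R$ restricts to a bounded map $W_i^0 \to \tilde W_i^0$. This restriction property holds because $V_i^0$ is a closed subspace of $V_i$ and each of $E_\R$, $\Hc_i$, $R_{\R^+}$ acts only in the time variable, and hence preserves the homogeneous spatial Dirichlet condition on $\partial\Omega_i\setminus\Gamma$. Consequently
\begin{equation*}
    \sup_{v\in\tilde W_i^0\setminus\{0\}} \frac{\langle A_i u, v\rangle}{\|v\|_{\tilde W_i}} \geq \frac{\langle A_i u, B_iu\rangle}{\|B_iu\|_{\tilde W_i}} \geq c\|u\|_{W_i}.
\end{equation*}

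The main obstacle is the nondegeneracy of the bilinear form in the test variable, i.e., showing that $\langle A_iu, v\rangle = 0$ for all $u\in W_i^0$ forces $v = 0$ in $\tilde W_i^0$. I would address this by mirroring the proof of \cref{lemma:ai} on the test side, introducing a dual operator $\tilde B_i v = R_{\R^+}(\cos(\varphi)I + \sin(\varphi)\Hc_i)E_{\text{even}}v$ with flipped sign reflecting the skew-symmetry of $\partial_t$, and then repeating the Fourier computation of \cref{eq:Aitcoer2} with the roles of $E_\R$ and $E_{\text{even}}$ swapped. The delicate point is verifying that applying $\Hc_i$ to an even extension and restricting to $\R^+$ yields an element of $H^{1/2}_{00}\bigl(\R^+, L^2(\Omega_i)\bigr)$, so that $\tilde B_i$ indeed lands in $W_i^0$; this is where the asymmetry between $E_{\text{even}}$ and $E_\R$ must be handled carefully on the Fourier side. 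Granted $\langle A_i\tilde B_iv, v\rangle \geq c\|v\|_{\tilde W_i}^2$, nondegeneracy follows, the Banach--Ne\v{c}as--Babu\v{s}ka theorem supplies a unique $u_0\in W_i^0$ with $\|u_0\|_{W_i} \leq C(\|g\|_{(\tilde W_i^0)^*} + \|\eta\|_Z)$, and $u = u_0 + w$ is the desired solution.
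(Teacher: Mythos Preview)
Your reduction via the lifting $w=R_i\eta$ and the ansatz $u=u_0+w$ with $u_0\in W_i^0$ is exactly what the paper does. The difference is that the paper does not re-derive the Banach--Ne\v{c}as--Babu\v{s}ka ingredients on $W_i^0\times\tilde W_i^0$; it simply invokes \cite[Corollary~3.9]{schwab17} for existence, uniqueness and the a priori bound for $u_0$, and then assembles the estimate for $u$ from \cref{lemma:Ti} and \cref{lemma:ai}. Your inf--sup half is fine (minor slip: $V_i^0=H^1_0(\Omega_i)$ means zero trace on \emph{all} of $\partial\Omega_i$, not only on $\partial\Omega_i\setminus\Gamma$; the conclusion that $B_i$ maps $W_i^0$ into $\tilde W_i^0$ is nevertheless correct, since the purely temporal operators leave the spatial factor untouched).

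The genuine gap is in your nondegeneracy step. The proposed operator $\tilde B_i v = R_{\R^+}\bigl(\cos(\varphi)I+\sin(\varphi)\Hc_i\bigr)E_{\text{even}}v$ does \emph{not} map $\tilde W_i^0$ into $W_i^0$. Already the identity component yields $\cos(\varphi)\,R_{\R^+}E_{\text{even}}v=\cos(\varphi)\,v$, which lies in $H^{1/2}\bigl(\R^+,L^2(\Omega_i)\bigr)$ but in general not in $H^{1/2}_{00}\bigl(\R^+,L^2(\Omega_i)\bigr)$; the exponent $1/2$ is precisely the critical one at which these two spaces differ. So the obstruction is not only the Hilbert-transform term you flag as ``delicate'' but the very first term, and swapping the extensions does not repair it. Establishing injectivity of the adjoint here genuinely requires an additional idea (for instance relating the adjoint to a backward-in-time problem and reusing a forward inf--sup estimate after a suitable reflection), which is effectively what the result the paper cites encapsulates. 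If you want a self-contained argument you will have to supply that mechanism; otherwise, citing the well-posedness result as the paper does is the clean route.
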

The proof follows by first applying~\cite[Corollary 3.9]{schwab17} to 
 \begin{equation*}
    	    \langle A_iu_0, v\rangle=\langle g-KA_iR_i\eta, v\rangle\quad \textrm{for all }v\in \tilde{W}_i^0,
\end{equation*}
for which the unique solution $u_0$ satisfies the bound $\|u_0\|_{W_i}\leq C\|g-KA_iR_i\eta\|_{(\tilde{W}_i^0)^*}$. Using~\cite[(4.2)]{EHEE24} for $\R^+$ yields $T_iu_0=0$, which shows that $u=u_0+R_i\eta$ is the unique solution to~\cref{eq:nonhomo}, and the desired bound follows by the corresponding bound for $u_0$ together with~\cref{lemma:Ti,lemma:ai}. Here $K: \tilde{W}_i^*\rightarrow(\tilde{W}_i^0)^*:\ell\mapsto \restr{\ell}{\tilde{W}_i^0}$ denotes the bounded and linear, but not necessarily injective, inclusion map.

Applying~\cref{lemma:Fi} with $g=0$ or $\eta=0$ yields the bounded solution operators $F_i:Z\rightarrow W_i$ and $G_i:(\Tilde{W}_i^0)^*\rightarrow W_i^0$, respectively.
   
    \section{Transmission problem and Steklov--Poincaré operators}\label{sec:tran}
  The transmission problem is to find $(u_1, u_2)\in W_1\times W_2$ such that
    \begin{equation}\label{eq:weaktran}
    	\left\{\begin{aligned}
    	     \langle A_i u_i, v_i\rangle &=\langle f_i, v_i \rangle & & \text{for all } v_i\in \tilde{W}_i^0,\, i=1,2,\\
    	     T_1u_1&=T_2u_2, & &\\
    	     \textstyle\sum_{i=1}^2 \langle A_i  u_i, R_i\mu\rangle-\langle f_i, R_i\mu\rangle &=0 & &\text{for all }\mu\in Z. 
    	\end{aligned}\right.
    \end{equation}
    Before discussing the equivalence of the weak equation and the transmission problem, we need to be able to glue together functions in our Hilbert spaces without loss of regularity. The result follows by a tensor basis argument, see~\cite[Lemma 9]{EHEE24}.
    \begin{lemma}\label{lemma:paste}
Suppose that~\cref{ass:dom} holds. If $u\in W$ then $u_i=\restr{u}{\Omega_i\times\R^+}$ satisfy $u_i\in W_i$ and $T_1u_1=T_2u_2$. Conversely, if  $u_i\in W_i$ and $T_1u_1=T_2u_2$ then $u=\{u_i \textrm{ on } \Omega_i\times\R^+\, , i=1,2\}$ satisfies $u\in W$. The same result holds with $(W,W_i)$ replaced by $(\tilde{W},\tilde{W}_i)$.
    \end{lemma}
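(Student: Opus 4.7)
Both assertions (for $W$ and for $\tilde W$) admit the same proof with $H^{1/2}_{00}$ and $E_\R$ replaced by $H^{1/2}$ and $E_{\text{even}}$ throughout, so I describe the $W$-case only.

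The restriction direction is direct. For $u\in W$, the spatial restriction $v\mapsto v|_{\Omega_i}$ is bounded from $V$ into $V_i$ (a function vanishing on $\partial\Omega$ still vanishes on $\partial\Omega_i\setminus\Gamma$), so $u_i\in L^2(\R^+,V_i)$. Since spatial restriction commutes with temporal zero-extension, $E_\R u_i = (E_\R u)|_{\Omega_i\times\R}\in H^{1/2}(\R,L^2(\Omega_i))$, giving $u_i\in H^{1/2}_{00}(\R^+,L^2(\Omega_i))$. The interface identity $T_1u_1=T_2u_2$ is then obtained by approximating $u$ by elements of $\D$ via \cref{lemma:dense}, for which matching interface traces are automatic, and passing to the limit with \cref{lemma:Ti}.

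The gluing direction is the real content. Given $u_i\in W_i$ with $T_1u_1=T_2u_2$, a naive pasting in space at almost every time is not obviously legitimate because the trace identity only holds in the space-time trace space $Z$, not pointwise in time in $\Lambda$. My plan is a tensor-product basis argument in time: choose a basis $\{\psi_k\}\subset C^\infty_0(\R^+)$ that simultaneously diagonalizes the norms of $L^2(\R^+)$ and $H^{1/2}_{00}(\R^+)$, obtained for instance by spectral decomposition of a compact self-adjoint operator associated with these two Hilbert norms. Each $u_i$ then expands as
\begin{equation*}
    u_i = \sum_k \psi_k \otimes v_k^{(i)}, \qquad v_k^{(i)}\in V_i,
\end{equation*}
and the $W_i$-norm separates into independent summability conditions on $\{v_k^{(i)}\}$ in $V_i$ and in $L^2(\Omega_i)$ (the latter weighted by the corresponding eigenvalues). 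Applying $T_i$ termwise yields $\hat T_1 v_k^{(1)} = \hat T_2 v_k^{(2)}$ in $\Lambda$. The purely spatial pasting of $V_i$-functions with matching $\Lambda$-traces into an element of $V=H^1_0(\Omega)$ (standard under \cref{ass:dom}) then produces $v_k\in V$ with $\|v_k\|_V^2\leq C(\|v_k^{(1)}\|_{V_1}^2 + \|v_k^{(2)}\|_{V_2}^2)$, and reassembling $u = \sum_k \psi_k \otimes v_k$ delivers a member of $W$ whose restrictions recover the $u_i$.

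The main obstacle is the simultaneous diagonalization of the two temporal norms, which is the mechanism by which the joint space-time trace condition in $Z$ is converted into a family of coordinatewise spatial trace conditions in $\Lambda$. Once this is in place, the spatial pasting into $V$ and the bookkeeping of the two summability conditions are routine; this is precisely the content abstracted in \cite[Lemma 9]{EHEE24}.
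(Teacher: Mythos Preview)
Your plan coincides with the paper's: the paper gives no self-contained argument either and simply invokes the tensor-basis result \cite[Lemma~9]{EHEE24}, which is precisely the reference you arrive at. One genuine slip in your sketch, however, is the proposed mechanism for the simultaneously diagonalizing temporal basis: the embedding $H^{1/2}_{00}(\R^+)\hookrightarrow L^2(\R^+)$ is \emph{not} compact, because $\R^+$ is unbounded (translate a fixed bump toward $+\infty$), so no compact self-adjoint operator is available and a discrete eigenbasis of the kind you describe need not exist. The cited lemma must therefore obtain the tensor decomposition by a different route, and your sketch does not supply one. As a separate remark, your stated obstacle is a little overstated: since $Z\hookrightarrow L^2(\R^+,\Lambda)$ and $T_i$ acts as $\hat T_i$ pointwise in time, the identity $T_1u_1=T_2u_2$ already yields $\hat T_1 u_1(t)=\hat T_2 u_2(t)$ in $\Lambda$ for a.e.\ $t$, so the $L^2(\R^+,V)$-membership of the glued function follows from the purely spatial pasting applied fibrewise, while the $H^{1/2}_{00}$-component is automatic from the orthogonal splitting $L^2(\Omega)=L^2(\Omega_1)\oplus L^2(\Omega_2)$.
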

 After establishing~\cref{lemma:paste} the equivalence of the weak equation and the transmission problem now follows in the same way as for linear elliptic equations~\cite[Lemma 1.2.1]{quarteroni}; also see~\cite[Remark 2]{EHEE24}.
    \begin{lemma}
        Suppose that~\cref{ass:dom} holds. If $u$ solves~\cref{eq:weak} then $(u_1, u_2)=(\restr{u}{\Omega_1\times\R^+},\restr{u}{\Omega_2\times\R^+})$ solves~\cref{eq:weaktran}. Conversely, if $(u_1, u_2)$ solves~\cref{eq:weaktran} then $u=\{u_i \textrm{ on } \Omega_i\times\R^+\, , i=1,2\}$ solves~\cref{eq:weak}.
    \end{lemma}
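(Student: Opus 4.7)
The plan is to imitate the classical elliptic argument (as in \cite[Lemma 1.2.1]{quarteroni}), using \cref{lemma:paste} for passing between a global function on $\Omega\times\R^+$ and its restrictions, and the right inverse $R_i$ from \cref{lemma:Ti} for splitting test functions into an interior piece and an interface lift. A preliminary observation that I would record explicitly at the start is that $A$ is additive over the decomposition: for $u\in W$ with restrictions $u_i = u|_{\Omega_i\times\R^+}$ and for $v\in \tilde{W}$ with restrictions $v_i$, one has
\begin{equation*}
    \langle Au, v\rangle = \sum_{i=1}^2 \langle A_i u_i, v_i\rangle.
\end{equation*}
For $u,v\in\D$ this is immediate from the integral defining $A$ and the splitting $\Omega = \Omega_1\cup\Omega_2\cup\Gamma$ (with $\Gamma$ of measure zero), and the general case follows from density (\cref{lemma:dense}) and boundedness of $A, A_i$ (\cref{lemma:ai}), combined with \cref{lemma:paste} to ensure that restriction is bounded $W\to W_i$ and $\tilde W\to\tilde W_i$. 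The analogous splitting for $f$ is built into \cref{ass:eq}.

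For the forward direction, assume $u\in W$ solves \cref{eq:weak} and set $u_i = u|_{\Omega_i\times\R^+}$. By \cref{lemma:paste}, $u_i\in W_i$ and $T_1 u_1 = T_2 u_2$, giving the interface condition. To obtain the subdomain equations, pick $v_i\in\tilde{W}_i^0$, extend it by zero to $\Omega\times\R^+$; since $V_i^0 = H_0^1(\Omega_i)$ already forces a vanishing trace on all of $\partial\Omega_i$, the extension lies in $\tilde{W}$ by the $\tilde W$-version of \cref{lemma:paste}. Plugging this extension into \cref{eq:weak} and using the additivity observation plus \cref{ass:eq} gives exactly $\langle A_i u_i, v_i\rangle = \langle f_i, v_i\rangle$. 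For the flux transmission condition, given $\mu\in Z$, form the glued function $v=\{R_i\mu \text{ on } \Omega_i\times\R^+\}$; since $T_1 R_1\mu = T_2 R_2\mu = \mu$, \cref{lemma:paste} (applied to $\tilde W$) puts $v$ in $\tilde W$. Testing \cref{eq:weak} against this $v$ and invoking additivity of $A$ and $f$ yields $\sum_i \langle A_i u_i, R_i\mu\rangle = \sum_i \langle f_i, R_i\mu\rangle$.

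For the reverse direction, assume $(u_1,u_2)$ solves \cref{eq:weaktran} and let $u=\{u_i\}$; by \cref{lemma:paste} and the interface equality $T_1 u_1 = T_2 u_2$, one has $u\in W$. Pick an arbitrary test function $v\in\tilde W$ with restrictions $v_i\in\tilde W_i$; the $\tilde W$-version of \cref{lemma:paste} gives $T_1 v_1 = T_2 v_2 =: \mu\in Z$. The key step is the decomposition
\begin{equation*}
    v_i = (v_i - R_i\mu) + R_i\mu, \qquad v_i - R_i\mu \in \tilde W_i^0,
\end{equation*}
which holds because $T_i(v_i - R_i\mu) = \mu - \mu = 0$ and because $\tilde W_i^0$ is precisely the kernel of $T_i$ restricted to $\tilde W_i$ (owing to $V_i^0 = \{w\in V_i: T_i w = 0\}$). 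Applying the first equation in \cref{eq:weaktran} to $v_i - R_i\mu$ and summing over $i$ gives $\sum_i \langle A_i u_i, v_i\rangle = \sum_i \langle f_i, v_i\rangle + \sum_i \bigl(\langle A_i u_i, R_i\mu\rangle - \langle f_i, R_i\mu\rangle\bigr)$, and the second sum vanishes by the third equation of \cref{eq:weaktran}. The additivity observation then rewrites the left-hand side as $\langle Au, v\rangle$ and the right-hand side as $\langle f, v\rangle$, yielding \cref{eq:weak}.

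The only non-bookkeeping step is the additivity of $A$ across the interface at the level of $W\times\tilde W$, which, unlike in the purely elliptic case, needs the density of $\D$ and $\D_i$ from \cref{lemma:dense}. Everything else is a direct transcription of the elliptic argument.
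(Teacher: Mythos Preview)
Your proof is correct and is precisely the classical elliptic argument the paper invokes by citing \cite[Lemma~1.2.1]{quarteroni} once \cref{lemma:paste} is available; the paper gives no further details beyond that reference. Your explicit additivity identity $\langle Au,v\rangle=\sum_i\langle A_iu_i,v_i\rangle$, justified via the splitting $A=A^t+A^s$ from \cref{lemma:ai} together with the density statements of \cref{lemma:dense} in the respective component spaces, is exactly the small extra step the parabolic setting requires over the elliptic one.
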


The Steklov--Poincaré operators and interface source terms are defined as
    \begin{align*}
    \langle S_i\eta, \mu\rangle=\langle A_iF_i\eta, R_i\mu\rangle\quad\text{and}\quad\langle\chi_i,\mu\rangle=\langle f_i-A_iG_if_i, R_i\mu\rangle,
    \end{align*}
    respectively. The transmission problem can now be reformulated as the Steklov--Poincaré equation by setting $\eta=T_iu_i$ and $u_i=F_i\eta+G_if_i$. This gives that the transmission problem is equivalent to finding $\eta\in Z$ such that
    \begin{equation}\label{eq:speq}
    	   \textstyle\sum_{i=1}^2 \langle S_i\eta, \mu\rangle=\sum_{i=1}^2 \langle \chi_i, \mu\rangle\quad \textrm{for all } \mu\in Z.
    \end{equation}
    This follows by simply considering the definition of the Steklov--Poincaré operators. We can now validate the bijectivity and the monotonicity properties stated in the introduction.
\begin{theorem}
        Suppose that~\cref{ass:dom,ass:eq} hold. The operators $S_i:Z\rightarrow Z^*$ are then bounded and fulfill the monotonicity property~\cref{eq:mon} with $(k,X_i)=\bigl((\cdot)^2,L^2(\R^+, V_i)\bigr)$. Furthermore, the operators $S_1+S_2$ and $sJ+S_i$ are bijective.
    \end{theorem}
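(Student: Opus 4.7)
Boundedness of $S_i:Z\to Z^*$ follows by composing the three bounded linear maps $F_i:Z\to W_i$, $A_i:W_i\to\tilde{W}_i^*$, $R_i:Z\to\tilde{W}_i$ from~\cref{lemma:Fi,lemma:ai,lemma:Ti} in $\langle S_i\eta,\mu\rangle=\langle A_iF_i\eta,R_i\mu\rangle$. For the monotonicity, I would set $u=F_i\eta-F_i\mu\in W_i$: by linearity and the defining property of $F_i$ one has $T_iu=\eta-\mu$ and $\langle A_iu,w\rangle=0$ for all $w\in\tilde{W}_i^0$. Since $u-R_i(\eta-\mu)$ has zero trace and thus lies in $W_i^0\subset\tilde{W}_i^0$,
\begin{equation*}
\langle S_i\eta-S_i\mu,\eta-\mu\rangle=\langle A_iu,R_i(\eta-\mu)\rangle=\langle A_iu,u\rangle\geq c\|u\|_{L^2(\R^+,V_i)}^2,
\end{equation*}
by~\cref{eq:Aico}, which is exactly~\cref{eq:mon} with $k(x)=x^2$ and $X_i=L^2(\R^+,V_i)$.

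\textbf{Bijectivity of $sJ+S_i$.} Writing $u=F_i\eta$, I would reformulate $(sJ+S_i)\eta=\chi$ as the Robin-type variational problem of finding $u\in W_i$ with
\begin{equation*}
a_s(u,v):=\langle A_iu,v\rangle+s(T_iu,T_iv)_H=\langle\chi,T_iv\rangle\quad\text{for all }v\in\tilde{W}_i,
\end{equation*}
and then setting $\eta=T_iu$; the equivalence uses the splitting $v=(v-R_iT_iv)+R_iT_iv$ together with $v-R_iT_iv\in\tilde{W}_i^0$ and the orthogonality from step~1. I would then apply the Banach--Ne\v{c}as--Babu\v{s}ka theorem. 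Testing with $v=B_i^\varphi u$ from~\cref{lemma:ai} already delivers $\langle A_iu,B_i^\varphi u\rangle\geq c\|u\|_{W_i}^2$. For the added boundary contribution, I would use that $\Hc_i$ acts only in time and therefore commutes with the spatial trace, so that $T_iB_i^\varphi u=\cos\varphi\cdot T_iu-\sin\varphi\cdot R_{\R^+}\Hc E_\R T_iu$ where $\Hc$ denotes the analogous Hilbert transform on $L^2\bigl(\R,L^2(\Gamma)\bigr)$. Since $\Hc$ is skew-adjoint on real $L^2(\R)$, Plancherel forces the cross term $(E_\R T_iu,\Hc E_\R T_iu)$ to vanish and leaves $s(T_iu,T_iB_i^\varphi u)_H=s\cos\varphi\|T_iu\|_H^2\geq 0$, whence $a_s(u,B_i^\varphi u)\geq c\|u\|_{W_i}^2$ uniformly in $s\geq 0$. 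The transposed inf-sup would follow from the same argument applied to the formal adjoint $A_i^*$ (its time derivative flips sign) with $B_i^{-\varphi}$ in place of $B_i^\varphi$.

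\textbf{Bijectivity of $S_1+S_2$.} Here I would avoid BNB entirely and instead exploit the chain of equivalences (already set up around~\cref{eq:speq}) between the Steklov--Poincar\'e equation, the transmission problem~\cref{eq:weaktran}, and the weak equation~\cref{eq:weak}, together with the unique solvability of~\cref{eq:weak} from~\cite[Corollary 3.9]{schwab17}. Injectivity is immediate: $(S_1+S_2)\eta=0$ produces a zero-source transmission problem whose unique weak solution is $u\equiv 0$, forcing $F_i\eta=0$ and $\eta=T_iF_i\eta=0$. For surjectivity, given $\chi\in Z^*$ I would set $f_1=0$ and define $\langle f_2,v\rangle:=\langle\chi,T_2v\rangle$, which lies in $\tilde{W}_2^*$ by~\cref{lemma:Ti} and annihilates $\tilde{W}_2^0$; hence $G_2f_2=0$ and $\chi_2=\chi$. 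The gluing $\langle f,v\rangle=\langle f_2,\restr{v}{\Omega_2\times\R^+}\rangle$ supplies a compatible $f\in\tilde{W}^*$ as required by~\cref{ass:eq}, and Schwab's unique weak solution $u$ then yields $\eta=Tu$ satisfying $(S_1+S_2)\eta=\chi$.

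\textbf{Main obstacle.} The delicate point is the BNB inf-sup for $sJ+S_i$ uniformly in the method parameter $s>0$: the Robin perturbation $s(T_iu,T_iv)_H$ a priori has no sign and would destroy any naive coercivity estimate once $s$ is large. The observation that saves the argument is the Fourier-analytic fact that the Hilbert transform is skew-adjoint on real-valued $L^2(\R)$, which causes the problematic cross term to vanish and turns the Robin contribution into a nonnegative perturbation of the $B_i^\varphi$-based estimate of~\cref{lemma:ai}.
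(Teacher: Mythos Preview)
Your boundedness and monotonicity arguments are correct and coincide with the paper's (which simply cites the $\R$-case in~\cite{EHEE24}).

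For $S_1+S_2$ you take a genuinely different route: the paper applies the Banach--Ne\v{c}as--Babu\v{s}ka theorem on $Z$ with the test operator $P=T_iB_iF_i$, whereas you exploit the equivalence chain \cref{eq:speq}$\Leftrightarrow$\cref{eq:weaktran}$\Leftrightarrow$\cref{eq:weak} together with well-posedness of~\cref{eq:weak}. Your surjectivity construction (pushing an arbitrary $\chi\in Z^*$ into a right-hand side $f_2=\chi\circ T_2$ that annihilates $\tilde W_2^0$) is correct; it buys you the result without touching BNB, at the cost of invoking the global solvability theorem.

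For $sJ+S_i$ your inf-sup step is fine, and in fact your skew-adjointness computation for the Hilbert transform is \emph{exactly} the mechanism behind the paper's one-line claim $\langle J\eta,P\eta\rangle\geq 0$; you have just lifted it from $Z$ to $W_i$. The gap is in your transposed condition. The operator $B_i^{-\varphi}=R_{\R^+}(\cos\varphi\,I+\sin\varphi\,\Hc_i)E_\R$ is built from the zero extension $E_\R$, and $E_\R$ is \emph{not} bounded from $H^{1/2}\bigl(\R^+,L^2(\Omega_i)\bigr)$ into $H^{1/2}\bigl(\R,L^2(\Omega_i)\bigr)$; this is precisely the $H^{1/2}$ versus $H^{1/2}_{00}$ distinction that forces the trial/test asymmetry in the first place. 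Hence $B_i^{-\varphi}$ does not map $\tilde W_i$ into $W_i$, and ``the same argument applied to the formal adjoint'' does not go through. The paper avoids this entirely by running BNB on $Z\times Z$: since trial and test coincide there, the second BNB condition reduces to $\langle(sJ+S_i)\mu,\mu\rangle>0$ for $\mu\neq 0$, which follows immediately from $\langle J\mu,\mu\rangle\geq 0$ and the monotonicity $\langle S_i\mu,\mu\rangle\geq c\|\mu\|_{L^2(\R^+,\Lambda)}^2$. If you insist on staying at the level of $W_i\times\tilde W_i$, the transposed condition can still be obtained, but by a different mechanism: test with $u=F_i(T_iv)\in W_i$ to force $T_iv=0$ via the monotonicity on $Z$, and then use surjectivity of $A_i:W_i^0\to(\tilde W_i^0)^*$ to conclude $v=0$.
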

    \begin{proof}
        The boundedness and coercivity follow directly as the case for $\R$, see e.g.~\cite[Lemma 13]{EHEE24}. To prove bijectivity we employ the Banach--Ne\v{c}as--Babu\v{s}ka theorem, see e.g.~\cite[Theorem 2.6]{ern}. For simplicity we first prove that $S_i$ is bijective. We define $P=T_iB_iF_i$, where $B_i$ is as in~\cref{lemma:ai} and $P$ is independent of $i$ due to the commutative property in~\cite[Lemma 7]{EHEE24}. The inf-sup condition follows from
        \begin{align*}
            &\langle S_i\eta, P\eta\rangle=\langle A_iF_i\eta, F_i T_iB_iF_i\eta\rangle=\langle A_iF_i\eta, B_iF_i\eta\rangle\\
            &\qquad+\langle A_iF_i\eta, (F_i T_iB_iF_i-B_iF_i)\eta\rangle=\langle A_iF_i\eta, B_iF_i\eta\rangle\geq \|F_i\eta\|_{W_i}^2\geq c\|\eta\|_Z^2,
        \end{align*}
        where we have used that $(F_i T_iB_iF_i-B_iF_i)\eta\in \tilde{W}_i^0$. The adjoint injectivity condition is
        \begin{align*}
            \langle S_i\mu, \mu\rangle\geq c\|\mu\|_{L^2(\R^+, \Lambda)}^2>0\quad\text{for }\mu\neq 0.
        \end{align*}
        The proof for $S_1+S_2$ is similar and the proof for $sJ+S_i$ follows by the same argument and the facts that $\langle J\eta, P\eta\rangle\geq 0$ and $\langle J\eta, \eta\rangle\geq 0$.
    \end{proof}
    For the convergence of the Robin--Robin method we require the following assumption.
    \begin{assumption}\label{ass:reg}
        Let $u$ be the solution to~\cref{eq:weak}. The linear functionals
        \begin{displaymath}
        \mu\mapsto \langle A_i\restr{u}{\Omega_i\times\R^+}, R_i\mu\rangle-\langle f_i, R_i\mu\rangle,\quad{i=1,2},
        \end{displaymath}
        are in $H^*=L^2(\Gamma\times\R^+)^*$.
    \end{assumption}
The convergence now follows from~\cite[Proposition~1]{lionsmercier}, as described in~\cref{sec:intro}.
\begin{theorem}
If~\cref{ass:dom,ass:eq,ass:reg} hold, then the iterates $(u_1^n, u_2^n)$ of the Robin--Robin method converges to the solution $(u_1, u_2)$ of~\cref{eq:weaktran} in $L^2(\R^+, V_1)\times L^2(\R^+, V_2)$.
\end{theorem}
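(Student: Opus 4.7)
The overall strategy is to follow the abstract blueprint sketched in \cref{sec:intro}: reformulate the Robin--Robin method as the Peaceman--Rachford iteration \cref{eq:pr} on the interface space $Z$, invoke \cite[Proposition~1]{lionsmercier} to extract the interface limit \cref{eq:limit}, and then transfer this limit to subdomain convergence through the monotonicity bound \cref{eq:mon} established in the preceding theorem.

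First, I would set up the iteration by writing the Robin--Robin iterates in the form $(u_1^n,u_2^n)=(F_1\eta^n+G_1f_1,F_2\eta^n+G_2f_2)$, where $\eta^n\in Z$ evolves according to the affine version of \cref{eq:pr} associated with \cref{eq:speq}. The bijectivity of $sJ+S_i$ and of $S_1+S_2$ from the preceding theorem guarantees that the iteration is well defined at every step and that the interface trace $\eta=T_1u_1=T_2u_2$ of the weak solution is the unique solution of \cref{eq:speq}, so that $\eta$ is the natural candidate limit of $\eta^n$.

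Next, I would restrict each $S_i$ to a maximal monotone operator $\SC_i$ on $H=L^2(\Gamma\times\R^+)$, whose domain consists of those $\mu\in Z$ for which $S_i\mu$ is represented through $J$ by an element of $H$. The content of \cref{ass:reg} is precisely that $\eta\in D(\SC_1)\cap D(\SC_2)$, which is the regularity hypothesis required by \cite[Proposition~1]{lionsmercier}. The abstract result then yields \cref{eq:limit}, and combining it with \cref{eq:mon} for $(k,X_i)=((\cdot)^2,L^2(\R^+,V_i))$ gives $\|F_i\eta-F_i\eta^n\|_{L^2(\R^+,V_i)}\to 0$. Since $u_i-u_i^n=F_i(\eta-\eta^n)$, the claimed convergence in $L^2(\R^+,V_1)\times L^2(\R^+,V_2)$ follows immediately.

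The only potentially subtle point is verifying that the fixed point $\eta$ lies in the domain of the maximal monotone restrictions, which is exactly the role of \cref{ass:reg}; beyond this, no further analytic work is needed and the proof reduces to a direct assembly of the earlier bijectivity, monotonicity, and trace results.
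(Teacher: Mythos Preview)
Your proposal is correct and follows exactly the route the paper prescribes: the paper's proof is literally the one line ``The convergence now follows from~\cite[Proposition~1]{lionsmercier}, as described in~\cref{sec:intro},'' and your write-up is precisely an expansion of that sketch using the bijectivity and monotonicity from the preceding theorem together with \cref{ass:reg}. Your identification of the role of \cref{ass:reg} as placing $\eta$ in $D(\SC_1)\cap D(\SC_2)$, and of the relation $u_i-u_i^n=F_i(\eta-\eta^n)$ as the bridge from \cref{eq:limit,eq:mon} to the claimed $L^2(\R^+,V_i)$-convergence, matches the paper exactly.
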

\bibliographystyle{plain}
\bibliography{ref}

\begin{acknowledgement}
Funding: This work was supported by the Swedish Research Council under the grants 2019--05396 and 2023--04862.
\end{acknowledgement}

\end{document}